\newtheorem{thm}{Theorem}[section]
\newtheorem{prop}[thm]{Proposition}
\newtheorem{lemma}[thm]{Lemma}
\newtheorem{cor}[thm]{Corollary}
\newtheorem*{thm*}{Main Theorem}
\newtheorem*{lemma*}{Lemma}
\newtheorem*{cor*}{Corollary}
\theoremstyle{definition}
\newtheorem{defn}[thm]{Definition}
\newtheorem{example}[thm]{Example}
\newtheorem*{ex*}{Example}
\newtheorem{remark}[thm]{Remark}
\numberwithin{equation}{section}
\newcommand{\Hp}{\mathbf{H}}
\newcommand{\Sp}{\mathbf{S}}
\title{$7$-located locally $5$-large complexes are aspherical}
\begin{document}

\author[K.~Goldman]{Katherine Goldman$^{\dag}$}
	\address{
		Department of Mathematics and Statistics,
		McGill University,
		Burnside Hall,
		805 Sherbrooke Street West,
		Montreal, QC,
		H3A 0B9, Canada}
        
\email{kat.goldman@mcgill.ca}

\thanks{$\dag$ Partially supported by NSF DMS-2402105}

	\author[P.~Przytycki]{Piotr Przytycki$^{\ddag}$}
	
	\address{
		Department of Mathematics and Statistics,
		McGill University,
		Burnside Hall,
		805 Sherbrooke Street West,
		Montreal, QC,
		H3A 0B9, Canada}
	
	\email{piotr.przytycki@mcgill.ca}
	
	\thanks{$\ddag$ Partially supported by NSERC}

	\begin{abstract}
		\noindent
		We prove that $7$-located locally $5$-large simplicial complexes are aspherical.
	\end{abstract}

\maketitle

\section{Introduction}
A simplicial complex is \emph{flag} if every set of vertices pairwise connected by edges spans a simplex. For $k\geq 5$, a flag simplicial complex is \emph{$k$-large} if it has no induced cycles of length $4\leq n<k$. A simplicial complex is \emph{locally $k$-large}, if each of its vertex links is $k$-large. 
This notion was introduced by Januszkiewicz and \'Swi{\c a}tkowski~\cite{JS}, and independently by Haglund \cite{H}, as a simplicial analogue of a locally $\mathrm{CAT}(0)$ (i.e.\ nonpositively-curved) cube complex. They showed that such complexes are ubiquitous in any dimension, and come with interesting automorphism groups. A cornerstone feature is that for $k\geq 6$ they are aspherical. The $1$-skeleta of simply connected locally $6$-large simplicial complexes were studied earlier in graph theory under the name of bridged graphs, see \cite{BC} for a survey.

The boundary of the icosahedron is locally $5$-large, so in order to obtain asphericity under this weaker condition, Osajda introduced an extra hypothesis of $m$-location~\cite{O2} (we will give the definition in a moment). $7$-located locally $5$-large simplicial complexes include many $3$-manifolds, as well as all locally weakly systolic complexes~\cite{HL}, which were studied earlier in \cite{O1,CO}. The properties of $m$-located complexes were investigated in \cite{O2,HL}.

Maybe the most prominent example of a $7$-located locally $5$-large simplicial complex is the triangulation of the hyperbolic space $\Hp^4$ where each of the vertex links is isomorphic to the boundary of the $600$-cell. The symmetry group of that triangulation is the Coxeter group with Coxeter diagram the linear graph of length $4$ with consecutive labels $5333$. We are interested in this triangulation since the associated Artin group is one of the smallest Artin groups for which the $K(\pi,1)$ conjecture, asking for the contractibility of the associated Artin complex, is still open.

In this paper, we prove the following related result.

\begin{thm*}
Every $7$-located locally $5$-large simplicial complex is aspherical.
\end{thm*}

\section{Location}
Let $X$ be a flag simplicial complex.

\begin{defn}

A $k$-\emph{wheel} $W$ in $X$ is an induced subcomplex isomorphic to the cone over the $k$-cycle. We write $W = (v_0, v_1v_2\cdots v_k)$, where the \emph{centre} $v_0$ is the cone vertex and $v_1,\ldots,v_k$ are the consecutive vertices of the \emph{boundary} cycle.

A pair $W=(W_1,W_2)$ of wheels, with $W_1=(v_0, v_1\cdots v_k),W_2=(w_0, w_1\cdots w_\ell)$, is a \emph{$(k,\ell)$-dwheel} if
    \begin{itemize}
        \item $v_k = w_0$, 
        \item $w_\ell = v_0$,
        \item $v_{k-1} = w_{\ell - 1}$, and
        \item either $v_1$ equals $w_1$ or is a neighbour of $w_1$.
    \end{itemize}
    The \emph{boundary} $\partial W$ of the dwheel $W$ is the cycle    
    $v_1\cdots v_{k-1}w_{\ell - 2} \cdots w_1$. (If $w_1 = v_1$, then we discard the redundant $w_1$.) 
    If $v_1 = w_1$, then we say that $W$ is \emph{planar}. %
\end{defn}

\begin{defn}
    Let $m\geq 4$. $X$ is \emph{$m$-located}, if for every dwheel $W=(W_1,W_2)$ with $|\partial W|\leq m$, all the vertices of $W_1\cup W_2$ have a common neighbour in $X$. 
\end{defn}

\begin{example}
Let $X$ be the simplicial complex that is the triangulation of the hyperbolic space $\Hp^4$ where each of the vertex links is isomorphic to the boundary $\Sp^3_{600}$ of the $600$-cell, which is $5$-large.
Note that the vertex links of $\Sp^3_{600}$ are isomorphic to the boundary $\Sp^2_{20}$ of the icosahedron. Since each induced $5$-cycle in $\Sp^3_{600}$ and $\Sp^2_{20}$ is the boundary of a $5$-wheel, each $5$-wheel in $X$ can be extended to the join of the $5$-cycle and a triangle $\Delta$. Furthermore, each $6$-wheel in $X$ can be extended to the join of the $6$-cycle and an edge $e$. Hence $X$ does not contain a planar $(5,6)$-dwheel $(W_1,W_2)$ with $W_1\cup W_2$ without a common neighbour, since otherwise appropriate $\Delta$ and~$e$ are disjoint and so $\Delta,e,$ and $v_1=w_1$ span a simplex of dimension $5$ in $X$, which is a contradiction. The $(5,5)$-dwheels are excluded similarly, which implies that $X$ is $7$-located.
\end{example}

\subsection{Disc diagrams}
\label{subsec:discdiagram}

A \emph{disc diagram} $D$ is a simplicial complex homeomorphic to a disc. A \emph{disc diagram in $X$} is a simplicial map $f\colon D\to X$ that is \emph{nondegenerate}, i.e.\ does not send any edge to a vertex. We say that $f$ has \emph{boundary cycle} $f(\partial D)$. A disc diagram $f\colon D\to X$ is \emph{minimal} if it has minimal area (i.e.\ the number of triangles in~$D$) among all the diagrams in $X$ with the same boundary cycle. We say that $f$ is \emph{reduced} if it is locally injective at $D\setminus D^{0}$. The following is a well-known variation of a result by Van Kampen. 

\begin{lemma}[{\cite[Lem~2.16 and 2.17]{MW}}] 
\label{lem:VK}
Any homotopically trivial cycle embedded in~$X^1$ is the boundary cycle of a disc diagram in $X$. Any minimal disc diagram is reduced.
\end{lemma}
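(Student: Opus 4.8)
The plan is to treat the two assertions separately, since they rest on different mechanisms: the first is an existence statement proved by a Van Kampen--style filling argument, while the second is a local surgery argument exploiting area-minimality. Let $\gamma$ be an embedded null-homotopic cycle in $X^1$. Since $X$ is a simplicial complex, $\pi_1(X)=\pi_1(X^2)$ is the quotient of $\pi_1(X^1)$ by the normal closure of the boundary cycles of the $2$-simplices (triangles) of $X$. As $\gamma$ is null-homotopic, reading it as a word $w$ in the oriented edges of $X^1$, I would write $w$ as a product $\prod_i u_i r_i u_i^{-1}$ of conjugates of such triangle-boundary relators $r_i$. Following Van Kampen, I would then assemble the associated planar diagram by gluing one triangle of $X$ for each factor $r_i$, along the pattern dictated by the conjugating words $u_i$; this produces a connected, simply connected planar complex built from triangles whose boundary word is $w$, together with a simplicial map to $X$.

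A priori this diagram need not be homeomorphic to a disc: it may contain spurs or cut vertices, and the accompanying map may be degenerate. Here I would pass to a diagram of \emph{minimal} area and invoke the hypothesis that $\gamma$ is \emph{embedded}. A spur (a boundary edge traversed twice in succession) or a cut vertex forces the boundary word $w$ to repeat a vertex, contradicting embeddedness once the area is minimal; a degenerate interior edge (one collapsed to a vertex by the map) has at least one interior endpoint, again because distinct boundary vertices have distinct images, so contracting it lowers the area without altering the boundary cycle. After these reductions the diagram is a simply connected planar triangulated complex with embedded boundary cycle and no cut points, hence is homeomorphic to a disc, and the map is the required nondegenerate disc diagram $f\colon D\to X$.

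For the second assertion, let $f\colon D\to X$ be a minimal disc diagram, and suppose for contradiction that $f$ fails to be locally injective at some point of $D\setminus D^{0}$. Since $f$ is nondegenerate, it is injective on each closed simplex (non-collapse of the three edges of a triangle forces its three vertices to distinct images), so local injectivity can only fail along an interior edge $e=[a,b]$ bounding two distinct triangles $[a,b,c]$ and $[a,b,d]$ that map to the same $2$-simplex of $X$; writing $f(a)=A,\ f(b)=B$, non-collapse forces $f(c)=f(d)=C$, so the two triangles fold across $e$. I would then perform the standard folding surgery: remove the two open triangles and the open edge $e$, and identify $c$ with $d$, gluing $[a,c]$ to $[a,d]$ and $[b,c]$ to $[b,d]$, which is consistent because each such pair has equal image in $X$. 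The outcome is a disc diagram with the same boundary cycle but area smaller by two, contradicting minimality; hence $f$ is reduced.

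The main obstacle I anticipate lies entirely in the first assertion, and not in producing \emph{some} singular filling (which is immediate from null-homotopy) but in guaranteeing that a minimal-area, degeneracy-free filling is genuinely a \emph{disc}, that is, a simplicial complex homeomorphic to $D^{2}$ rather than a more general planar diagram. This is exactly the step that must use embeddedness of $\gamma$ to exclude cut vertices and spurs, and it requires checking that the area-reducing moves (edge contraction, spur removal) preserve the boundary cycle and the disc structure. By comparison, the folding surgery behind the second assertion is a purely local move whose only subtlety is verifying that the re-gluing keeps the domain a disc with unchanged boundary.
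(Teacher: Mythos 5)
The paper offers no proof of this lemma: it is quoted verbatim from McCammond--Wise \cite{MW} (their Lemmas 2.16 and 2.17), and your two-part plan --- a Van Kampen filling cleaned up via minimal area and embeddedness of the boundary, then a folding surgery to show minimal implies reduced --- is the standard argument behind that citation. Your first part is essentially right: a spur or cut vertex forces the boundary path to repeat a vertex image, contradicting embeddedness of $\gamma$, and a degenerate edge must have an interior endpoint for the same reason (though note that your contraction move has the same doubled-edge caveat discussed below).

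There is, however, a genuine gap in the second part, at the step ``the outcome is a disc diagram with the same boundary cycle but area smaller by two.'' After removing the open edge $e=[a,b]$ and the two open triangles and identifying $c\sim d$, $[a,c]\sim[a,d]$, $[b,c]\sim[b,d]$, the quotient need not be a simplicial disc. Nondegeneracy of $f$ does rule out the edge $[c,d]$ (it would map to a vertex, since $f(c)=f(d)$), but it does not rule out a vertex $w\neq a,b$ adjacent to both $c$ and $d$; then $[w,c]$ and $[w,d]$ become a doubled edge, and the quotient is not a simplicial complex. Worse, if $D$ consists of exactly the two folded triangles, the surgery collapses $D$ to an arc, which is not a disc diagram at all --- and this is not a removable nuisance: for $X$ a single $2$-simplex $ABC$ and the (non-embedded) boundary word $ACBC$, the folded square is genuinely area-minimal among simplicial discs with that boundary cycle, yet it is not reduced. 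So with the paper's strict definition of disc diagram and arbitrary boundary cycles, your surgery does not immediately contradict minimality, and the literal statement itself is delicate. The repair is either to work, as McCammond--Wise do, in a category of singular (not necessarily simplicial, not necessarily disc) diagrams, where the fold surgery stays in-category and Lemma 2.17 holds as stated, or to cascade the fold reductions (each strictly decreases area, so the process terminates) and then convert the resulting singular diagram back into an honest disc using exactly the spur/cut-vertex/degenerate-edge cleanup from your first part --- which requires the boundary cycle to be embedded. That is the setting in which the paper actually applies the lemma (lunar diagrams have boundary a union of two geodesics meeting only at their endpoints), so your argument closes once you route the contradiction through that repair step rather than asserting the surgered object is itself a disc diagram.
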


\begin{lemma}[{\cite[Thm~B]{HL}}] 
\label{lem:main interior valence}
If $X$ is $7$-located and locally $5$-large, then so is $D$ for each minimal disc diagram $D\to X$. In other words, $D$ has no
\begin{itemize}
        \item interior vertices of valence $3$ or $4$, or
       \item neighbouring interior vertices with valences $5$ and $5$ or $6$.
\end{itemize}
\end{lemma}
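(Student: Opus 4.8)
The plan is to argue by contradiction through a minimal counterexample, using that a minimal disc diagram is reduced by \cref{lem:VK}, i.e.\ locally injective away from its vertices. I would first translate the two bullet points into local pictures. In a disc diagram the link of an interior vertex $v$ is a cycle whose length is the valence of $v$, and such a cycle is $5$-large exactly when its length is at least $5$ (the links of boundary vertices are arcs and impose no condition); moreover the closed double star of two adjacent interior vertices $u,v$ is a disc whose boundary cycle has length $\mathrm{val}(u)+\mathrm{val}(v)-4$. Since the only wheels and dwheels available in a disc diagram are closed stars of interior vertices and double stars of adjacent interior pairs, the valence conditions in the second formulation are exactly the assertions that $D$ is locally $5$-large and $7$-located, so it suffices to exclude the four configurations of valence $3$, valence $4$, and adjacent valences $(5,5)$ and $(5,6)$. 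In each case I would assume the configuration occurs in a minimal $f\colon D\to X$ and build a disc diagram with the same boundary cycle $f(\partial D)$ but strictly smaller area.

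For an interior vertex $v_0$ of valence $3$ or $4$ the reduction is purely local and uses only flagness and local $5$-largeness of $X$. Reducedness forces the images of the link vertices to be pairwise distinct: the coincidence of the two endpoints of a length-$2$ arc in the link would be a fold across the intermediate edge at $v_0$, and for valence $3$ or $4$ every non-adjacent pair of link vertices is at link-distance $2$. Hence the link embeds as a $3$- or $4$-cycle in $\mathrm{lk}(f(v_0))$. If the valence is $3$, flagness fills in the image triangle and I replace the $3$ triangles of the star of $v_0$ by one; if the valence is $4$, local $5$-largeness of $\mathrm{lk}(f(v_0))$ provides a diagonal of the image square, and retriangulating along it replaces $4$ triangles by $2$. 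Either move drops the area by $2$, contradicting minimality.

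The core is the case of adjacent interior vertices $u,v$ of valences $(5,5)$ or $(5,6)$, where the full strength of $7$-location enters. In the generic situation $f$ is injective on the double wheel $W=(W_1,W_2)$ and introduces no chords among the images, so $f(W)$ is an honest dwheel in $X$ with $|\partial f(W)|=|\partial W|\in\{6,7\}\le 7$. Then $7$-location supplies a vertex $z$ of $X$ adjacent to every vertex of $f(W_1)\cup f(W_2)$, in particular to every vertex of $f(\partial W)$. Coning $\partial W$ over a new apex sent to $z$ gives, by flagness, a nondegenerate disc filling the double-wheel region with $6$ (resp.\ $7$) triangles in place of the original $8$ (resp.\ $9$), a net loss of $2$, and again contradicts minimality.

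The hard part will be the degenerate sub-cases that obstruct this clean picture, and making the resulting case analysis exhaustive. Reducedness controls only link vertices at distance $2$, so the valence-$6$ wheel may have its two antipodal boundary vertices identified by $f$; the images may acquire chords, so that the links of $f(u)$ or $f(v)$ are strictly larger than a $5$- or $6$-cycle; and the common neighbour $z$ may coincide with an existing vertex, so that the naive cone degenerates. In each such branch one must instead extract from the coincidence a shorter homotopically trivial cycle or a dwheel of smaller boundary, and fill it more efficiently via flagness, local $5$-largeness, or a lower instance of $7$-location. The real work is the bookkeeping guaranteeing a strict area decrease in every branch while keeping the replacement a nondegenerate disc diagram with unchanged boundary.
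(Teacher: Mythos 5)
The paper does not prove this statement at all --- it is imported verbatim as \cite[Thm~B]{HL} --- so there is no internal argument to compare against. That said, your surgery strategy (flagness kills interior valence $3$, $5$-largeness of the link of $f(v_0)$ kills valence $4$, $7$-location kills adjacent interior $(5,5)$ and $(5,6)$ pairs, each move dropping the area by $2$) is the natural one and is, in outline, how results of this type are proved. Your dictionary between the two formulations is also essentially right: in a simplicial disc every wheel is an induced closed star of an interior vertex and every dwheel is planar, so once the valence conditions hold, $7$-location is vacuous, since every induced double star then has boundary length at least $8$.

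The genuine gap is that the ``degenerate sub-cases'' you defer to bookkeeping are not bookkeeping; they are the proof. Two concrete failure points. In the valence-$4$ case, the diagonal may already be present in $D$: the edge $xz$ can exist in $D$ outside the star of $v_0$ (the $3$-cycle $xv_0z$ then bounds a subdisc of $D$ containing $y$ or $w$), and then $\bigl(D\setminus \mathrm{star}(v_0)\bigr)\cup\{xyz,xzw\}$ is not a simplicial disc with boundary $\partial D$: the edge $xz$ acquires too many triangles, or the identification alters the boundary cycle. One needs a different move there, e.g.\ replacing the subdisc bounded by $xv_0z$, which has area at least $3$, by the single triangle that flagness provides. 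More seriously, in the $(5,5)$/$(5,6)$ case, $7$-location applies only to dwheels, which by the paper's definition are \emph{induced} subcomplexes: if $f$ identifies two vertices of the double star, or if the image acquires a chord in $X$, then $f(W_1\cup W_2)$ is not a dwheel and the hypothesis gives you nothing; you must instead extract an area reduction from the identification or from the chord itself, by a different mechanism in each branch. Reducedness only forbids identifying link vertices at link-distance $2$, so, as you note, antipodal identification in the valence-$6$ link is not excluded --- and none of the three moves you describe handles it. Verifying that every such branch still yields a strictly smaller nondegenerate diagram with the same boundary is precisely the content of the case analysis in \cite{HL}; as written, your text is a correct plan together with an acknowledgement that this, the actual proof, is missing.
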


Since by (the proof of) \cite[Cor~4.7]{HL} each $D$ above with $|\partial D|=4$ has at most five triangles, we have:

\begin{cor}
\label{cor:5large}
Each $D$ as in Lemma~\ref{lem:main interior valence} is $5$-large.
\end{cor}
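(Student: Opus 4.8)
The plan is to verify directly the two conditions defining $5$-largeness for $D$: that $D$ is flag, and that $D$ has no induced $4$-cycle. In both cases the mechanism is the same. Any short induced cycle $C$ in $D$ bounds, by the Jordan curve theorem applied to the disc $D$, a subdisc $R\subseteq D$ with $\partial R=C$, and $R$ (with the restricted map) is again a minimal disc diagram in $X$: replacing it by any diagram with the same boundary cycle $f(C)$ and smaller area would, after regluing, produce a diagram for $\partial D$ of smaller area, contradicting the minimality of $D$. I would then analyse $R$ using the inherited area bounds and the valence restrictions of Lemma~\ref{lem:main interior valence}, together with the elementary fact that a disc diagram with $b$ boundary vertices and $i$ interior vertices has exactly $b+2i-2$ triangles (an Euler-characteristic count).

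For flagness, first note that since $D$ is two-dimensional, it is flag if and only if it has no empty triangle: a $4$-clique with no empty triangle among its faces would force all four of its triangular faces to be present, producing a subcomplex homeomorphic to $S^2$, which cannot sit inside the disc $D$. So suppose $abc$ were an empty triangle. The subdisc $R$ it bounds is not a single $2$-simplex, so by the triangle count it has $F=1+2i\geq 3$ triangles. Because $f$ is nondegenerate, $f(a),f(b),f(c)$ are pairwise distinct, and as $ab,bc,ca$ are edges they are pairwise adjacent in $X$; since $X$ is flag they span a $2$-simplex $\sigma$. Replacing the interior of $R$ by a single triangle mapping to $\sigma$ strictly decreases the area of $D$, contradicting its minimality. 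Hence $D$ has no empty triangle and is flag.

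For the absence of induced $4$-cycles, suppose $v_1v_2v_3v_4$ is one. The subdisc $R$ it bounds is a minimal disc diagram with $|\partial R|=4$, so by the bound stated just before the corollary (every such diagram has at most five triangles) and the count $F=2+2i$, we get $i\le 1$. If $i=0$, then $R$ is a quadrilateral split by a diagonal, necessarily one of the chords $v_1v_3$ or $v_2v_4$; but that chord is then an edge of $D$, contradicting that the cycle is induced. If $i=1$, then for each boundary edge the third vertex of its triangle cannot be a boundary vertex (that would require an unavailable chord), so it must be the unique interior vertex; thus $R$ is the cone over the $4$-cycle and its interior vertex has valence $4$, contradicting Lemma~\ref{lem:main interior valence}. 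In either case we reach a contradiction, so $D$ has no induced $4$-cycle and is therefore $5$-large.

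The point demanding the most care is the reduction itself: one must confirm that the subdisc cut out by an induced cycle genuinely inherits minimality, including the degenerate situations where $C$ meets $\partial D$, so that both the five-triangle bound and the interior-valence restrictions of Lemma~\ref{lem:main interior valence} apply to it, and that the regluing used in the flagness step produces a valid nondegenerate diagram. Once this bookkeeping is settled, the Euler-characteristic count and the enumeration of the two tiny remaining configurations are routine, and the heavy lifting has already been supplied by the cited area bound.
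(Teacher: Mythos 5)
Your proof is correct and follows essentially the same route as the paper: the paper's one-sentence justification invokes the five-triangle bound for minimal diagrams with $|\partial D|=4$ (quoted just before the corollary), and your Euler-count case analysis ($i=0$ forces a chord contradicting inducedness, $i=1$ forces an interior vertex of valence $4$ contradicting Lemma~\ref{lem:main interior valence}), together with the cut-and-reglue argument showing the subdisc inherits minimality, is exactly the argument that citation stands in for. The only difference is that you establish flagness of $D$ from scratch via the single-triangle replacement, whereas the paper absorbs it into Lemma~\ref{lem:main interior valence} (since $7$-location and local $5$-largeness are defined for flag complexes, flagness of $D$ is implicitly part of that lemma's conclusion); your self-contained treatment of this point is correct, and your regluing step is indeed clean precisely because the cycles you cut along are induced, so no doubled simplices arise.
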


\begin{remark}
The $\kappa'$ method from the proof of Proposition~\ref{prop:lunar2} can be used to give an alternative proof of Corollary~\ref{cor:5large}, since in a minimal counterexample to Corollary~\ref{cor:5large} each valence $5$ interior vertex has non-positive $\kappa'$.
\end{remark}

\section{Lunar diagrams}

In this section, we assume that all disc diagrams $D$ are $7$-located and locally $5$-large. On the $1$-skeleton $X^1$ of $X$ we consider the path metric $d$, where all the edges have length $1$.

\begin{defn}
    Let $x$ and $v$ be distinct vertices of a simplicial complex $X$, and suppose that $\gamma_1$ and $\gamma_2$ are geodesics from $x$ to $v$ in $X^1$ 
    that are disjoint except at the endpoints.
    A minimal diagram $D\to X$ with boundary $\gamma_1 \cup \gamma_2$ is a \emph{lunar disc diagram between $x$ and $v$}. %

If the identity map $D\to D$ is lunar (that is, if $\partial D$ is a union of geodesics $\gamma_1,\gamma_2$ in~$D^1$ from $x$ to $v$), then $D$ is \emph{lunar}.
Then for a vertex $u$ of~$\partial D$, an interior vertex of~$D$ of valence $5$ is \emph{$u$-exposed},  %
if it is a neighbour of both neighbours of $u$ %
in $\partial D$. %
\end{defn}

By Corollary~\ref{cor:5large}, for each $u$ there is at most one $u$-exposed vertex. %

\begin{prop} \label{prop:lunar2}
Let $D$ be a lunar disc diagram between $x$ and $v$ and let $v_1,v_2$ be the neighbours of $v$ in $\partial D$. Then 
\begin{enumerate}[(i)]
\item $v_1$ and $v_2$ are neighbours and have a common neighbour closer to $x$ than $v_1,v_2$, or
\item there is a $v$-exposed neighbour $v'$ of $v$ whose neighbours $v_1',v_2'$ distinct from $v,v_1,v_2$ are closer to $x$ than $v'$.
\end{enumerate}
\end{prop}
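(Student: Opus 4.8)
The plan is to reduce everything to the local structure at the tip $v$. The link of $v$ in $D$ is an induced path $v_1 = u_0, u_1, \dots, u_t = v_2$ whose $t$ edges record the triangles at $v$ and whose interior vertices $u_1, \dots, u_{t-1}$ are interior vertices of $D$, hence of valence at least $5$ by \Cref{lem:main interior valence}. First I would pin down the common neighbours of $v_1$ and $v_2$. For $t \geq 2$ the endpoints $v_1, v_2$ of this induced path are non-adjacent, so a common neighbour $w \neq v$ would create an induced $4$-cycle $v_1 v v_2 w$ unless $w \sim v$; thus by \Cref{cor:5large} we get $w \sim v$, so $w$ lies on the link path and, being adjacent to both endpoints $u_0$ and $u_t$, must equal both $u_1$ and $u_{t-1}$. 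Consequently, for $t \geq 3$ the vertices $v_1, v_2$ have no common neighbour other than $v$; for $t = 2$ the only one is $u_1$; and for $t = 1$ we have $v_1 \sim v_2$. Since a $v$-exposed vertex is by definition a common neighbour of $v_1, v_2$, this already shows the proposition is equivalent to (a) proving the tip is \emph{narrow}, i.e. $t \leq 2$, and then (b) producing the asserted closer neighbour in each of the two surviving cases.

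The heart is step (a), and here I would use the modified-curvature ($\kappa'$) method referenced in the Remark. Assigning each triangle corner the angle $\tfrac{\pi}{3}$, combinatorial Gauss--Bonnet reads $\sum_p \kappa(p) = 2\pi$, with an interior vertex of valence $k$ contributing $2\pi - \tfrac{\pi}{3}k$ and a boundary vertex with $k$ triangles contributing $\pi - \tfrac{\pi}{3}k$. The only interior positivity occurs at valence $5$; by \Cref{lem:main interior valence} such a vertex has no interior neighbour of valence $5$ or $6$, so each of its interior neighbours has valence at least $7$ and carries a deficit of at least $\tfrac{\pi}{3}$. I would therefore define $\kappa'$ by discharging the surplus $\tfrac{\pi}{3}$ of each interior valence-$5$ vertex onto such a high-valence neighbour, which absorbs it while staying non-positive; this is exactly the mechanism the Remark attributes to this proof. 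Combined with the geodesic property of $\gamma_1, \gamma_2$ and \Cref{cor:5large} to control the boundary vertices, the residual positive curvature becomes confined, and I expect to run this as a minimal-counterexample argument (as in the Remark) to conclude that a tip cannot be wide, forcing $t \leq 2$.

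For step (b) I would finish by cases. If $t = 1$, then $v_1 \sim v_2$ and this edge is interior, so the triangle on its other side has an apex $z \neq v$, a common neighbour of $v_1, v_2$; invoking minimality of $D$ together with \Cref{lem:VK} (were $z$ not closer to $x$, one could reroute and reduce area) shows $z$ is closer to $x$ than $v_1, v_2$, giving (i). If $t = 2$, then $u_1$ is the unique common neighbour of $v_1, v_2$ besides $v$, and I would apply $7$-location to the dwheel formed by the two fans at $v$ and at $u_1$, whose boundary has length at most $7$, to force $u_1$ to have valence exactly $5$ (a larger valence, or the absence of the requisite common neighbour, would contradict location). Then $u_1$ is the (necessarily unique) $v$-exposed vertex, and the same minimality argument places its two remaining neighbours closer to $x$, giving (ii).

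The main obstacle is step (a). Unlike the systolic (locally $6$-large) case, geodesic boundary vertices here may carry positive curvature—the ``diamond'' vertices incident to only two triangles—so the discharging must be arranged so that, after the interior valence-$5$ surpluses are moved, these boundary contributions do not leave enough positive curvature to sustain a tip with $t \geq 3$. This is where $7$-location is indispensable: excluding a wide tip ultimately amounts to exhibiting a dwheel of boundary length at most $7$ whose vertices have no common neighbour, contradicting location. The delicate parts are therefore the precise bookkeeping tying the residual $\kappa'$ near $v$ to such a dwheel, and the verification in step (b) that the neighbours produced are genuinely closer to $x$.
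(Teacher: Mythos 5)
Your high-level identification of the method is right — the paper does prove this proposition by combinatorial Gauss--Bonnet with the $\kappa'$ discharging, and your reformulation (the conclusion forces the tip to be narrow, $t\le 2$) is a correct reading of what the statement says. But the actual content of the proof is missing at exactly the point you yourself flag as ``delicate.'' The paper does \emph{not} run the discharging on $D$ directly; it first proves Lemma~\ref{lem:nice lunar diagram}, replacing $D$ by a minimal-area lunar subdiagram $D'$ in which (3) every boundary vertex except four special ones ($z,v,v_1,v_2$) has valence $\ge 4$ — this is what kills the ``diamond'' boundary vertices you worry about — and (4) every interior valence-$5$ vertex with $\ge 3$ boundary neighbours is $u$-exposed for $u\in\{z,v,v_1,v_2\}$, so that every non-exposed interior valence-$5$ vertex has at least $3$ interior neighbours and hence $\kappa'\le 0$. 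That lemma is the bulk of the work (a multi-case argument cutting $D'$ along paths $z_1z_0z_3$ and comparing distances), and nothing in your proposal substitutes for it; ``I expect to run this as a minimal-counterexample argument'' is precisely the step that needs to be carried out. A smaller but real problem: your discharging rule as stated (push the whole $\pi/3$ surplus of a valence-$5$ vertex onto \emph{one} high-valence neighbour) fails, since a valence-$7$ interior vertex can have up to three interior valence-$5$ neighbours and would become positive; the paper instead sends $1/3$ across each interior edge and uses that the valence-$5$ neighbours of a valence-$7$ vertex form an independent set in its $7$-cycle link.

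Your step (b) is also not valid as argued, and in fact cannot be made local. For $t=1$, flagness of $D$ shows the only common neighbours of $v_1,v_2$ are $v$ and the apex $z$ of the second triangle on the edge $v_1v_2$; there is no ``rerouting'' available, because minimality is relative to a \emph{fixed} boundary cycle, and nothing local forces $d(z,x)=n-2$. In the paper this comes out of the global tally: the only admissible curvature distribution with $\kappa'(v)=2$ also forces $\kappa(v_i)=1$ for some $i$, i.e.\ $v_i$ has valence $3$, whence its link is the path $v,v_{3-i},z$ with $z$ the geodesic neighbour at distance $n-2$. For $t=2$, your appeal to $7$-location applied to ``the dwheel formed by the two fans at $v$ and at $u_1$'' does not parse: $v$ is a boundary vertex, so its star is a cone over a path rather than over a cycle, and the pair is not a dwheel; moreover $7$-location produces a common neighbour, it does not bound valences. (In Section~3 the $7$-location of $X$ enters only through Lemma~\ref{lem:main interior valence}; afterwards the proof of Proposition~\ref{prop:lunar2} is intrinsic to $D$.) That $u_1$ is $v$-exposed \emph{and} that its remaining neighbours are closer to $x$ again follows from the simultaneous conclusions $\kappa'(v)=\kappa'(v_1)=\kappa'(v_2)=1$ in the final tally (whose remaining cases are excluded because $x'$ would have an interior neighbour, contradicting $\kappa'(x')=1$). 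So your split into (a) and (b) defers the real work of (b) back to the same global argument left unfinished in (a); the gap is genuine.
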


In the proof, we need the following.

\begin{lemma} \label{lem:nice lunar diagram}
Let $D$ be a lunar disc diagram between $x$ and $v$ and let $v_1,v_2$ be the neighbours of $v$ in $\partial D$. 
Then there is a lunar disc diagram $D'\subseteq D$ between $z$ and $v$ such that 
    \begin{enumerate}
          \item the path $v_1vv_2$ lies in $\partial D'$,
          \item the function $d(\cdot,x)-d(\cdot,z)$ is constant on all the vertices of $D'$ at distance $\leq 2$ from $v$,
        \item each vertex on $\partial D'$ has valence at least $4$, except possibly for $z$, $v$, 
        $v_1$, or $v_2$, 
               \item each interior vertex of $D'$ of valence $5$ that is a neighbour of $\geq 3$ vertices of~$\partial D'$ is $u$-exposed with $u\in \{z,v,v_1,v_2\}$.
     \end{enumerate}  
\end{lemma}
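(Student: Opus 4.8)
The plan is to obtain $D'$ as a \emph{minimal-area} member of a suitable class of lunar sub-diagrams and then to show that minimality forces conditions~(3) and~(4). I would first fix the class $\mathcal{C}$ of sub-discs $D'\subseteq D$ that are lunar between some vertex $z$ and $v$, whose two boundary geodesics agree with $\gamma_1,\gamma_2$ along the path $v_1vv_2$ (this is~(1)), and for which $d(\cdot,x)-d(\cdot,z)$ is constant on the vertices within distance $2$ of $v$ (this is~(2)). Geometrically, the boundary arcs of such a $D'$ run along $\partial D$ near $v$ and then peel into the interior of $D$, meeting at a vertex $z$ that need not be $x$; allowing $z\neq x$ is exactly what lets us trim the diagram from the $x$-side. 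The original $D$, with $z=x$, lies in $\mathcal{C}$ (the difference function is identically $0$), so $\mathcal{C}\neq\varnothing$, and I fix $D'\in\mathcal{C}$ of minimal area.

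For condition~(3), suppose a boundary vertex $w\notin\{z,v,v_1,v_2\}$ has valence $\leq 3$ in $D'$. Since $w$ is then interior to one of the boundary geodesics, its two boundary-neighbours $w^-,w^+$ satisfy $d(w^-,w^+)=2$. Valence $2$ is impossible: the unique triangle at $w$ would make $w^-\sim w^+$, contradicting geodesicity. For valence $3$, the two triangles at $w$ fan out to a common fourth vertex $a$, and I would push the boundary across $w^-aw^+$, deleting $w$ and its two triangles; Corollary~\ref{cor:5large} keeps $w^-aw^+$ geodesic (no short induced cycles), and the result is a smaller member of $\mathcal{C}$, contradicting minimality. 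The point that makes this legitimate is that such surgeries keep $z$ fixed, so condition~(2) is inherited as the restriction of a constant function, and they never touch $v_1vv_2$ since those vertices are excluded.

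For condition~(4), I would argue \emph{after} (3) is in force. Let $a$ be an interior vertex of valence $5$ adjacent to at least three boundary vertices; its link is a $5$-cycle, so these boundary neighbours occupy three or more of its five link-vertices. Using Corollary~\ref{cor:5large} one checks that if $a$ is adjacent to the two boundary-neighbours $u^-,u^+$ of a boundary vertex $u$, then $a\sim u$ as well (else $u^-uu^+a$ is an induced $4$-cycle); hence being $u$-exposed means $a$ is adjacent to three consecutive boundary vertices centred at $u$. Combining this with~(3) and with Lemma~\ref{lem:main interior valence} (no interior valence $3$, $4$, or adjacent $5,5$), I would rule out every configuration in which the boundary neighbours of $a$ are not three consecutive ones centred at a corner in $\{z,v,v_1,v_2\}$: such a configuration is either reducible (cutting off $a$ and straightening the boundary lowers the area, and $a$ simply becomes a boundary vertex rather than a forbidden interior valence-$4$ vertex) or produces a forbidden cycle. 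The corner-exposed case is precisely the one where this reduction is blocked, which is why those vertices survive in the minimal $D'$.

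The hardest part will be bookkeeping for condition~(2) under the surgeries that move the basepoint. Most reductions fix $z$, so~(2) is automatic; but when a reduction occurs at or near $z$ itself, I must verify that the new basepoint $z'$ still lies on geodesics from $x$ to every vertex within distance $2$ of $v$, i.e.\ that $d(\cdot,x)-d(\cdot,z')$ remains constant there. This requires tracking the distance-to-$x$ layering across the cut, and it is where the assumed validity of~(2) for $D'$ is genuinely used. I also expect the interface between the region within distance $2$ of $v$ (where local $5$-largeness and Corollary~\ref{cor:5large} pin down the allowed low-valence and exposed vertices) and the region farther out (handled by the area-reducing moves) to demand the most care, together with the recurring need to confirm that each surgery keeps \emph{both} boundary arcs geodesic in $X^1$ simultaneously.
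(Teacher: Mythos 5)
Your setup coincides with the paper's: consider the class of lunar sub-diagrams $D'\subseteq D$ with varying basepoint $z$ satisfying (1) and (2), take a member of minimal area, get (3) from minimality, and prove (4) by showing that any violating interior valence-$5$ vertex would let you pass to a strictly smaller member of the class. Your argument for (3) is fine (the paper asserts it without detail), and so is the observation that an exposed vertex is adjacent to three consecutive boundary vertices. The genuine gap is in (4), which is the actual content of the lemma: you dispose of it with the sentence that every bad configuration ``is either reducible (cutting off $a$ and straightening the boundary lowers the area\dots) or produces a forbidden cycle,'' but this is precisely where the work lies, and the work is not routine. Writing $z_1,z_2$ for the two consecutive neighbours of $z_0:=a$ on one boundary geodesic (with $z_1$ closer to $v$), $z_3$ for its neighbour on the other geodesic, and $n=d(z_0,v)$, one must split into cases according to $d(z_1,v)\in\{n-1,n\}$ and $d(z_3,v)\in\{n-1,n,n+1\}$, and further according to how many of the two remaining neighbours of $z_0$ lie on the $z$-side of the cut path $z_1z_0z_3$. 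The smaller diagram is the component of $v$ after cutting along that path, but the new basepoint $z'$ must be chosen case by case: sometimes $z_0$ itself, sometimes $z_2$ or $z_3$, sometimes the unique bottom neighbour of $z_0$, and sometimes a common neighbour of $z_2$ and a bottom neighbour $z_4$, whose existence and distinctness from $z_0$ and $z$ have to be argued (via Corollary~\ref{cor:5large} and the possibility that $z_0$ is $z$-exposed); in two sub-cases there is no reduction at all, and the contradiction comes instead from the distance relation $d(z_0,v)=n$. None of this is ``cutting off $a$,'' and no single uniform surgery covers all six cases.

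Moreover, the two verifications you yourself flag as ``the hardest part'' --- that both new boundary arcs are geodesics from $z'$ to $v$, and that $d(\cdot,x)-d(\cdot,z')$ remains constant on the vertices near $v$ --- are exactly where these case-specific choices of $z'$ matter; the paper settles them by separation arguments (the function from (2) is constant on the vertices of the cut path, which separate the rest of the new diagram from $z$ and $z'$, so constancy propagates). Deferring these checks is deferring the proof: as written, your proposal is a correct plan that reproduces the paper's framework, but the case analysis that constitutes the proof of (4) is missing.
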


\begin{remark}
\label{rem:polite}
By (1) and (2), if $D'$ satisfies Proposition~\ref{prop:lunar2}(i) or (ii), then so does~$D$.
\end{remark}

\begin{proof}[Proof of Lemma~\ref{lem:nice lunar diagram}]
Let $D'\subseteq D$ be the lunar disc diagram of minimal area satisfying (1) and (2). Then $D'$ satisfies (3). To verify (4), let $z_0$ be an interior vertex of $D'$ of valence $5$ that is a neighbour of $m\geq 3$ vertices of $\partial D'$. %

If $m=5$, then by (3) $z_0$ is $v$-exposed. The same holds for $m=4$, unless $z_0$ has exactly $2$ neighbours %
on each $\gamma_i'$, which are distinct from $z,v$, and consecutive by Corollary~\ref{cor:5large}. We will discuss this possibility below, together with the case $m=3$. Namely, if $m=3$, then $z_0$ is $u$-exposed with $u\in \{z,v,v_1,v_2\}$, unless it has, say, two consecutive neighbours $z_1,z_2$ on $\gamma_1'$ and a neighbour $z_3$ on $\gamma_2'$, all of which are distinct from $z,v$. We can assume $d(z_1,v) = d(z_2,v)-1$. Let $n=d(z_0, v)$. By the triangle inequality, we have $d(z_1,v)=n-1$ or $n$, and $d(z_3,v)=n-1,n$ or $n+1$. In each of the cases we will prove that $z_0$ is $u$-exposed, with $u\in \{z,v,v_1,v_2\}$, or we will reach a contradiction by finding a properly contained lunar disc diagram $D''\subsetneq D'$ between a vertex~$z'$ of~$D'$ and~$v$ satisfying (1) and (2) and hence contradicting the minimality hypothesis. Consider the \emph{top} and \emph{bottom} components obtained from $D'$ by cutting along the path~$z_1z_0z_3$ containing $v$, and $z$, respectively. Each of the two neighbours of $z_0$ distinct from $z_1,z_2,z_3$ is \emph{top} (resp.\ \emph{bottom}) if it lies in the top (resp.\ bottom) component.

    \begin{enumerate}
    \item[\underline{Case 1}:] $d(z_1,v) = n-1$. %
        \item[a)] $d(z_3,v) = n-1$. In that case, if $z_0$ is not $v$-exposed, then $n\geq 3$ and we can take $z' = z_0$. 
        \item[b)] $d(z_3,v) = n$. If $z_0$ has exactly one bottom neighbour, then we can take $z'$ to be that vertex. Note that the function from (2) is constant on all the vertices of $D''$ except for $z'$ and $z_0$, since it is constant on $z_1,z_2,z_3$ and the top neighbour of $z_0$, which separate the remaining vertices of $D''$ from $z'$ and~$z$.
        If $z_0$ has two bottom neighbours, then we can take $z'=z_3$. If $z_0$ has no bottom neighbours, then $z_2$ is a neighbour of $z_3$, so they have a common neighbour $z' \not= z_0$ (which is distinct from $z$ if $z_0$ is not $z$-exposed).
        \item[c)] $d(z_3,v) = n+1$. In that case, take $z' = z_3$.
    \item[\underline{Case 2}:] $d(z_1,v) = n$. %
        \item[a)] $d(z_3,v) = n-1$. In that case, take $z' = z_2$.
        \item[b)] $d(z_3,v) = n$. If $z_0$ has no bottom neighbours, then take $z'=z_2$ (which is distinct from~$z$ if $z_0$ is not $z$-exposed). If $z_0$ has exactly one bottom neighbour, then denote it $z_4$. If $z_0$ is not $v$-exposed, then $n\geq 2$. Then we can take as $z'$ the common neighbour of $z_2$ and $z_4$ distinct from $z_0$ (which is distinct from $z$ if $z_0$ is not $z$-exposed).
        Note that the function from (2) is constant on all the vertices of $D''$ except for $z'$ and $z_4$, since it is constant on $z_0,z_1,z_2,$ and~$z_3$, which separate the remaining vertices of $D''$ from $z'$ and $z$.
        If $z_0$ has two bottom neighbours, then this contradicts $d(z_0,v)=n$. %
        \item[c)] $d(z_3,v) = n+1$. If $z_0$ has at least one bottom neighbour, we obtain a contradiction with $d(z_0,v)=n$. %
        If $z_0$ has no bottom neighbours, then $z_2$ is a neighbour of $z_3$, and they have a common neighbour $z' \not= z_0$ (which is distinct from $z$ if $z_0$ is not $z$-exposed). 
    \end{enumerate}
\end{proof}

\begin{proof}[Proof of Proposition~\ref{prop:lunar2}]
By Remark~\ref{rem:polite}, and Lemma~\ref{lem:nice lunar diagram}, we can assume that $D=D'$ and satisfies Lemma~\ref{lem:nice lunar diagram}(3,4).
For any interior vertex $w$ of $D'$, let $\kappa(w)=6$ minus the valence of $w$. For $w$ in $\partial D'$, let $\kappa(w)=4$ minus the valence of $w$. By the combinatorial Gauss--Bonnet theorem (see e.g.\ \cite[Thm~4.6]{MW}), the sum of all $\kappa(w)$ equals $6$. 
For each interior vertex $w$ of valence $5$, let $\kappa'(w)=\kappa(w)-\frac{N}{3}$, where $N$ is the number of the interior neighbours of $w$ (all of which have valence $\geq 7$). For each interior vertex $w$ of valence $\geq 7$, let $\kappa'(w)=\kappa(w)+\frac{N}{3}$, where $N$ is the number of the interior neighbours of $w$ of valence~$5$. We let $\kappa'(w)=\kappa(w)$ for the remaining $w$. Then the sum of all $\kappa'(w)$ equals $6$ as well.

If there is a $v$-exposed vertex, we call it $v'$. If such a vertex does not exist, but there are $v_i$-exposed vertices, then we call them $v_i'$. If there is an $x$-exposed vertex, we call it $x'$. 

\smallskip
\noindent \textbf{Claim.} \emph{$\kappa'$ is non-positive except possibly at
\begin{itemize}
\item $x,v$, where it is $\leq 2$,
\item $v_i$, where it is $\leq 1$,
\item $u$-exposed vertices, for $u\in \{x,v,v_1,v_2\}$, where it is $\leq 1$.
\end{itemize}}

\smallskip

Indeed, if an interior vertex $w$ of valence $5$ is not $u$-exposed for $u\in \{x,v,v_1,v_2\}$, then by (4) we have $N\geq 3$, and so $\kappa'(w)=\kappa(w)-\frac{N}{3}\leq 1-1$. On the other hand, if 
an interior vertex $w$ has valence $7$, then we have $N\leq 3$ and so $\kappa'(w)=\kappa(w)+\frac{N}{3}\leq -1+1$, and if it has valence $k\geq 8,$ then $N\leq \frac{k}{2}$ and so $\kappa'(w)=\kappa(w)+\frac{N}{3}\leq 6-k+\frac{k}{6}=6-\frac{5k}{6}<0$. This justifies the Claim.

\smallskip

To verify (i) or (ii) it suffices to check that 
\begin{enumerate}[(i)]
\item $\kappa'(v)=2$ and $\kappa(v_i)=1$ for some $i$, or
\item $\kappa'(v)=\kappa'(v_1)=\kappa'(v_2)=1$, and $v'$ exists.
\end{enumerate}

Note that if one of the $v'_i$ exists, then $\kappa'(v_i)=1$ and $\kappa'(v)\leq 0$. If both $v'_i$ exist, then $\kappa'(v)\leq -1$.

Thus for the sum of all $\kappa'(w)$ to be equal to $6$, the only remaining possibilities, up to a symmetry, are:

\begin{itemize}
\item $\kappa'(x)=\kappa'(v)=2, \kappa'(x')=\kappa'(v')=1, \kappa'(v_1)=\kappa'(v_2)=0$, 
\item $\kappa'(x)=2, \kappa'(v)=\kappa'(v_1)=\kappa'(v_2)=\kappa'(x')=1$, and there is no $v'$,
\item $\kappa'(x)=2, \kappa'(v)=\kappa'(v_1)=\kappa'(x')=\kappa'(v')=1, \kappa'(v_2)=0$, or 
\item $\kappa'(x)=2, \kappa'(v_1)=\kappa'(v_2)=\kappa'(v'_1)=\kappa'(v'_2)=\kappa'(x')=1, \kappa'(v)=-1$.
\end{itemize}

However, in all these cases, by (3), the vertex $x'$ has at least one interior neighbour, which contradicts $\kappa'(x')=1$.
\end{proof}

\section{Contractibility}

\begin{lemma} \label{lem:small balls are contractible}
    Suppose that $K$ is a flag simplicial complex 
    \begin{enumerate}
        \item of diameter $\leq 2$,
        \item $5$-large, and such that
        \item any induced $5$-cycle is the boundary of a wheel of $K$.
    \end{enumerate}
    Then $K$ is contractible.
\end{lemma}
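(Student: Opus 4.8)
The plan is to induct on the number of vertices of $K$, at each step deleting a \emph{dominated} vertex. Write $N[v]$ (resp.\ $N(v)$) for the closed (resp.\ open) neighbourhood of a vertex $v$, and call $u$ dominated by $w\neq u$ if $N[u]\subseteq N[w]$. As a base case, if $K$ has no pair of non-adjacent vertices then its vertices form a clique, so by flagness $K$ is a single simplex and hence contractible; thus we may assume non-adjacent vertices exist.

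The first ingredient is a \emph{projection lemma}: if $a,b$ are non-adjacent, then $N(a)\cap N(b)$ spans a simplex. Indeed, two non-adjacent common neighbours $p,q$ would make $a\,p\,b\,q$ an induced $4$-cycle, contradicting (2). The second ingredient is that deleting a dominated vertex is a homotopy equivalence preserving the hypotheses. If $u$ is dominated by $w$, then $u\sim w$ and every other vertex of $\mathrm{lk}(u)$ lies in $N(u)\subseteq N[w]$, so $\mathrm{lk}(u)$ is a cone with apex $w$; hence the full subcomplex $K\setminus u$ is obtained by an elementary collapse folding $u$ onto $w$, and $K\searrow K\setminus u$. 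Conditions (1) and (2) pass to $K\setminus u$ at once: any geodesic through $u$ can be rerouted through $w$, so distances do not increase, and induced cycles of $K\setminus u$ are induced in $K$. Condition (3) also survives: an induced $5$-cycle $C$ of $K\setminus u$ bounds a wheel of $K$ with some centre $c$; if $c\neq u$ the wheel persists, while if $c=u$ then all vertices of $C$ lie in $N(u)\subseteq N[w]$, forcing $w\notin C$ (otherwise the two cycle-non-neighbours of $w$ would still be neighbours of $w$, contradicting inducedness), so $w$ is adjacent to all of $C$ and serves as a new centre. Thus $K\setminus u$ again satisfies (1)--(3), and by induction it, and hence $K$, is contractible.

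The main obstacle is the remaining existence statement: \emph{whenever $K$ is not a simplex it has a dominated vertex}. This is a dismantlability assertion, and it is exactly here that hypothesis (3) is indispensable; the pentagon $C_5$ satisfies (1) and (2) but not (3), has no dominated vertex, and is homotopy equivalent to a circle. I expect to prove existence by fixing a vertex $a$ and examining the set $\bar A=V\setminus N[a]$ of vertices at distance $2$: by the projection lemma each $y\in\bar A$ comes with a simplex of projections $N(y)\cap N(a)$, and for two such vertices $y_1\sim y_2$ the induced $5$-cycle through $a$ and their projections must, by (3), have a centre, which lands in $N(a)$ and shows the projection simplices meet. Exploiting this coherence together with diameter $\leq 2$, I would locate a vertex whose closed neighbourhood is absorbed into that of a neighbour, in the spirit of the dismantlability of weakly systolic complexes. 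Making this selection canonical and verifying that it always succeeds is the crux of the argument.
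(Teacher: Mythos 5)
Your reductions are sound as far as they go: the projection lemma, the fact that deleting a dominated vertex is a deformation retraction, and the verification that hypotheses (1)--(3) pass to $K\setminus u$ are all correct. But there is a genuine gap, and you have named it yourself: the existence of a dominated vertex in every finite complex satisfying (1)--(3) that is not a simplex is never proved, only announced as a plan (``I expect to prove\ldots'', ``I would locate\ldots''). This existence statement is not a routine verification; it is exactly where all three hypotheses must interact, and it is of comparable difficulty to the lemma itself --- for the closely related class of weakly bridged graphs, dismantlability is a theorem of Chepoi--Osajda \cite{CO} whose proof requires a global (LexBFS-type) ordering argument, not a local selection around a single fixed vertex $a$. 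The paper sidesteps dismantlability altogether: it invokes Lemma~\ref{lem:HP} (\cite[Lem~8.11]{HP}), which says that any two simplices of $K$ admit a vertex adjacent or equal to all of their vertices, uses it to build an increasing chain of simplices $M_0\subseteq\cdots\subseteq M_n$ coning off all the simplices of a given finite subcomplex $K'$, and concludes that the span of $K'\cup M_n$ is contractible by \cite[Lem~8.13]{HP}. Until the dismantling step is supplied, your argument establishes nothing beyond the (correct) preliminary reductions.

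There is also a second, structural problem: your induction is on the number of vertices, so it only treats finite $K$. The lemma carries no finiteness hypothesis, and in the paper it is applied to the downward links $K(\sigma)$ inside a complex $X$ that is not assumed to be locally finite, so $K$ may well be infinite. This is precisely why the paper's proof opens with Whitehead's theorem, reducing contractibility of $K$ to exhibiting, for each finite subcomplex $K'$, a contractible subcomplex $K''$ of $K$ containing it; note that you cannot simply run your induction on $K'$ itself, because the class of complexes satisfying (1)--(3) is not closed under passing to full subcomplexes (only under deleting dominated vertices). So even after completing the existence step, you would still need to graft onto your argument either a transfinite dismantling procedure or an analogue of the paper's Whitehead reduction to handle infinite $K$.
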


In the proof, we will use the following.

\begin{lemma}[{\cite[Lem~8.11]{HP}}]
\label{lem:HP}
Let $K$ be as in Lemma~\ref{lem:small balls are contractible}. Then for any pair of simplices of $K$ with vertex sets $A_1,A_2$, there is a vertex $a$ of $K$ that is a neighbour or equal to all of the elements of $A_1\cup A_2$. 
\end{lemma}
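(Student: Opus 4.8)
The plan is to prove this Helly-type statement directly from the three hypotheses on $K$, by induction on $|A_1|+|A_2|$, with two structural facts as the engine. Writing $x\sim y$ for ``$x$ is adjacent to $y$'' and $N(x)$ for the set of neighbours of $x$, the first fact is that, since $K$ is $5$-large (flag with no induced $4$-cycle), for any two non-adjacent vertices $x,y$ the set $N(x)\cap N(y)$ spans a simplex: two of its vertices that were non-adjacent would, together with $x$ and $y$, form an induced $4$-cycle. The second fact is hypothesis (3): every induced $5$-cycle has a vertex adjacent to all five of its vertices. I also use repeatedly that, by diameter $\leq 2$, any two distinct non-adjacent vertices have a common neighbour.

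Set $A=A_1\cup A_2$ and call a vertex $a$ a \emph{solution} if it is adjacent or equal to every vertex of $A$. Several cases are immediate: if $A$ spans a simplex, any of its vertices is a solution; and if some $p\in A_1$ is adjacent or equal to every vertex of $A_2$, then $p$, being adjacent or equal to the whole clique $A_1$ as well, is a solution, and symmetrically. Hence we may assume $A_1\cap A_2=\varnothing$, that neither clique is dominated by a single vertex of the other, and that there is a cross pair $p\in A_1$, $q\in A_2$ with $p\not\sim q$.

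The engine is the case $A_1=\{p_1,p_2\}$ an edge and $A_2=\{q\}$ with $q\not\sim p_1,p_2$. Choose $c_i\in N(q)\cap N(p_i)$, which exist by diameter $\leq 2$. If some $c_i\sim p_j$ with $j\neq i$ we are done, so assume $c_1\not\sim p_2$ and $c_2\not\sim p_1$; then $c_1\not\sim c_2$, as an edge $c_1c_2$ would make $c_1p_1p_2c_2$ an induced $4$-cycle. Thus $q\,c_1\,p_1\,p_2\,c_2$ is an induced $5$-cycle, and by (3) its centre is adjacent to $p_1,p_2,q$, hence a solution. This pattern --- use diameter $\leq 2$ to produce connectors, use the no-$4$-cycle fact to rule out chords and obtain a genuine induced $5$-cycle through $q$, and take its wheel centre --- is what I would propagate in the induction.

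For the inductive step I would argue by minimal counterexample. Minimality forces, for each $x\in A$, a vertex $b_x$ adjacent or equal to every vertex of $A\setminus\{x\}$ but with $b_x\not\sim x$ and $b_x\neq x$ (otherwise $b_x$ is a solution). Fixing a non-adjacent cross pair $p\in A_1$, $q\in A_2$ and the witness $b=b_p$, I choose $r$ in the clique containing $p$ (we may assume it has a further vertex, else swap the two cliques or apply the core case) and $c\in N(p)\cap N(q)$, and analyse the cycle $p\,c\,q\,b\,r$: the no-$4$-cycle fact forces it to be induced (or produces an adjacency that already helps), so (3) yields a vertex $w$ adjacent to $p,q,c,b,r$. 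A pleasant consequence of the clique property is that $w$ is then automatically adjacent to \emph{every} vertex $s$ of $A$ with $s\sim p$: if $s\not\sim w$, then $p$ and $b$ both lie in $N(s)\cap N(w)$, forcing $p\sim b$, a contradiction. In particular $w$ is adjacent to all of $A_1$. The main obstacle, and the crux of the proof, is exactly here: $w$ need not be adjacent to the vertices of $A_2$ that are non-adjacent to $p$, so the repair can move the failure from $A_1$ to $A_2$ rather than removing it outright, and a naive area/distance potential does not strictly decrease. Closing this requires a careful global choice of the pair $p,q$ and of the repairing $5$-cycle (and likely iterating the repair while tracking which clique carries the failure) to ensure termination; I also note that condition (3) must be invoked in $K$ along cycles through $q$, since it is not inherited by the link of $q$.
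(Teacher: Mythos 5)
Your ``engine'' case (edge plus vertex) is correct, and it is indeed the only place where hypothesis (3) is genuinely needed; the degenerate subcases you elide (e.g.\ $c_1=c_2$, or $q$ adjacent to exactly one $p_i$, which is handled by the no-induced-$4$-cycle property alone) are routine. But the inductive step is not a proof, and you say so yourself: after producing $w$ adjacent to $p,q,c,b,r$ you only control the vertices of $A$ adjacent to $p$, the failure can migrate to the vertices of $A_2$ not adjacent to $p$, and you offer no potential that decreases --- ``a careful global choice \ldots\ likely iterating the repair'' is a plan, not an argument. In addition, the claim that the cycle $p\,c\,q\,b\,r$ is ``forced to be induced (or produces an adjacency that already helps)'' is unsubstantiated: a chord $qr$ or $cr$ yields no immediate progress. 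Since the paper itself imports this statement from \cite{HP}*{Lem~8.11} without an internal proof, your attempt must stand alone, and as written it has a genuine gap.

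The gap is closable with exactly the two tools you already have, reassembled into a different induction in which the non-adjacent pair fed into your ``upgrade trick'' is chosen differently. First prove the case of a simplex $\sigma=\sigma'\cup\{p\}$ and a vertex $q$, by induction on $|\sigma|$: take $b$ adjacent or equal to all of $\sigma'\cup\{q\}$; if $b$ is not adjacent or equal to $p$, apply your engine case to the edge $bq$ and the vertex $p$ to get $w$ adjacent or equal to $b,q,p$; then for any $p_i\in\sigma'$ with $w\not\sim p_i$ and $w\neq p_i$, both $b$ and $p$ lie in $N(w)\cap N(p_i)$ while $b\not\sim p$, so $w\,b\,p_i\,p$ is an induced $4$-cycle, a contradiction --- hence $w$ is a solution outright. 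Then prove the general case by induction on $|A_2|$: write $A_2=A_2'\cup\{q\}$, take $b$ adjacent or equal to all of $A_1\cup A_2'$, apply the simplex-plus-vertex case to the simplex spanned by $A_1\cup\{b\}$ and the vertex $q$ to get $w$, and for $s\in A_2'$ with $w\not\sim s$, $w\neq s$, the pair $b,q\in N(w)\cap N(s)$ with $b\not\sim q$ gives an induced $4$-cycle $w\,b\,s\,q$. Your mistake was to run the upgrade with the pair $(p,b)$, which only controls neighbours of $p$; the pairs $(b,p)$ and $(b,q)$ above control precisely the vertices where failure could occur, so the induction closes in one step, with no iteration, no termination argument, and with hypothesis (3) invoked only in the edge-plus-vertex base case.
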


\begin{proof}[Proof of Lemma~\ref{lem:small balls are contractible}]
    By Whitehead theorem, it suffices to show that any finite subcomplex $K'$ of~$K$ is contained in a contractible subcomplex $K''$ of $K$. We consider all the subsets $V_0,\ldots, V_n$ of the vertex set of $K'$ that span a simplex of $K$. Let~$M_0$ be the simplex spanned on $V_0$. Using Lemma~\ref{lem:HP}, we construct inductively simplices~$M_1,\ldots, M_n$ so that $M_i\supseteq M_{i-1}$ and $M_i$ contains a vertex $a_i$ such that $V_i\cup \{a_i\}$ spans a simplex. Let $K''$ be the span of the union of $K'$ and $M_n$. Note that $K''$ is flag and each maximal simplex of $K''$ intersects $M_n$. Then $K''$ is contractible (see e.g.\ \cite[Lem~8.13]{HP}). 
\end{proof}

An induced subcomplex $C$ of a simplicial complex $K$ is \emph{$3$-convex} if for every path~$abc$ with vertices $a,c$ in $C$ at distance $2$ in $K$, we have that $b$ also belongs to $C$.

\begin{remark} \label{lem:3-convex is contractable}
    Let $K$ be as in \Cref{lem:small balls are contractible}. If $C$ is a $3$-convex subcomplex of $K$, then~$K$ also satisfies the hypotheses of \Cref{lem:small balls are contractible}.
\end{remark}

\subsection{Downward links}
In the entire subsection, we assume that $X$ is a simply connected, $7$-located, locally $5$-large simplicial complex.

We fix a basepoint vertex $x$ of $X$. The \emph{ball} $B_n(x)$ (resp.\ the \emph{sphere} $S_n(x)$) is the subcomplex of $X$ spanned by all the vertices at distance $\leq n$ (resp.\ $=n$) from $x$ in~$X^1$.
Let $n > 0$ and let $\sigma$ be a simplex contained in $S_n(x)$. The \emph{link} of $\sigma$ is the intersection of the links of all the vertices of $\sigma$, treated as subcomplexes of $X$. The intersection $K(v)$ of the link of $\sigma$ with $S_{n-1}(x)$ is the \emph{downward link} of $v$.

Our goal is to show that downward links satisfy the hypotheses of \Cref{lem:small balls are contractible}, and so they are contractible. To start with, let $\sigma=v$ be a vertex. Note that $K(v)$
satisfies \Cref{lem:small balls are contractible}(2) since $X$ is locally $5$-large.

From Proposition~\ref{prop:lunar2} it follows that $K(v)$
satisfies \Cref{lem:small balls are contractible}(1), and more generally:

\begin{cor} \label{prop: diam of K}
Let $v_1,v_2$ be vertices of $K(v)$.
\begin{enumerate}[(i)]
    \item If $v_1$ and $v_2$ are neighbours, then $K(v_1)$ intersects $ K(v_2)$.
    \item If $v_1$ and $v_2$ are not neighbours, then %
    they have a common neighbour $v'$ in~$K(v)$ such that there is an edge $v_1'v_2'$ with $v_1'$ in $K(v_1v')$ and $v_2'$ in $K(v_2v')$.
\end{enumerate}
\end{cor}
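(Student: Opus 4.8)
The plan is to realize $v_1,v_2$ as the penultimate vertices of the two boundary geodesics of a lunar disc diagram mapping into $X$, and then to read off the two cases from Proposition~\ref{prop:lunar2}. Write $n=d(x,v)$, so $v_1,v_2\in S_{n-1}(x)$ (note $n\ge 2$, as $x$ is the only vertex of $S_0(x)$). First I would choose geodesics $\beta_1,\beta_2$ in $X^1$ from $x$ to $v_1$ and to $v_2$; since each $v_i$ is a neighbour of $v$, the concatenations $\gamma_1=\beta_1\cdot(v_1v)$ and $\gamma_2=\beta_2\cdot(v_2v)$ are geodesics from $x$ to $v$ whose penultimate vertices are $v_1,v_2$. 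The loop $\gamma_1\cup\gamma_2$ is null-homotopic because $X$ is simply connected, so by Lemma~\ref{lem:VK} it is the boundary of a minimal disc diagram $f\colon D\to X$, which is thus a lunar disc diagram between $x$ and $v$. Since $f$ is distance-nonincreasing and each boundary arc of $D$ has length $n$, those arcs are geodesics in $D^1$, so $D$ is itself lunar and Proposition~\ref{prop:lunar2} applies. Denote by $\tilde v_1,\tilde v_2$ the neighbours of $v$ in $\partial D$, so that $f(\tilde v_i)=v_i$ and $d(\tilde v_i,x)=n-1$.

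Next I would translate each alternative of Proposition~\ref{prop:lunar2} through $f$, using repeatedly that $f$ cannot increase distances, while adjacency to $v$ (at height $n$) and to $v_1,v_2$ (at height $n-1$) forces matching lower bounds on heights in $X$. In alternative~(i), $\tilde v_1,\tilde v_2$ are neighbours in $D$ with a common neighbour $w$ closer to $x$; then $f(w)$ is a common neighbour of $v_1,v_2$ with $d(f(w),x)\le d(w,x)\le n-2$, and since $f(w)$ is adjacent to $v_1\in S_{n-1}(x)$ we get $d(f(w),x)=n-2$, i.e.\ $f(w)\in K(v_1)\cap K(v_2)$. In alternative~(ii), the $v$-exposed vertex $v'$ has as its link a $5$-cycle which, since $v$ is adjacent to both $\tilde v_1,\tilde v_2$, must read $\tilde v_1,v,\tilde v_2,v_2',v_1'$; hence $v_1'$ is adjacent to $\tilde v_1$, $v_2'$ is adjacent to $\tilde v_2$, and $v_1'v_2'$ is an edge. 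Provided the heights come out right (see below), pushing forward gives $f(v')\in K(v)$ as a common neighbour of $v_1,v_2$, together with $f(v_1')\in K(v_1v')$, $f(v_2')\in K(v_2v')$, and the edge $f(v_1')f(v_2')$. Since alternative~(i) makes $\tilde v_1,\tilde v_2$, and hence $v_1,v_2$, neighbours in $X$, a non-adjacent pair can only produce alternative~(ii); this completes case~(ii) of the corollary.

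The two genuinely delicate points, which I expect to be the main obstacle, are the height bookkeeping in alternative~(ii) and the matching of the two dichotomies in case~(i). For the heights I must verify $d(v',x)=n-1$ (and not $n$) so that $f(v')\in K(v)$, and correspondingly $d(v_i',x)=n-2$; since $v'$ lies within distance $2$ of $v$, I would control this using Lemma~\ref{lem:nice lunar diagram}(2), which keeps the height function constant near $v$, together with the fact that $v'$ has the strictly lower neighbours $v_1',v_2'$. The harder issue is that an adjacent pair $v_1,v_2$ could fall into alternative~(ii), which exhibits only the edge $f(v_1')f(v_2')$ below the triangle $v_1v_2f(v')$ rather than a single vertex of $K(v_1)\cap K(v_2)$. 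Here I would exploit that a $v$-exposed vertex is by definition an \emph{interior} neighbour of $v$: when $v_1v_2$ is an edge, I would take $f$ to be a minimal diagram capping a minimal edge-diagram by the apex triangle $v\tilde v_1\tilde v_2$, so that the link of $v$ in $D$ is the single edge $\tilde v_1\tilde v_2$ and $v$ has no interior neighbour; then alternative~(ii) is impossible and alternative~(i) yields the desired vertex of $K(v_1)\cap K(v_2)$. The point requiring care is checking that this capped diagram is minimal (equivalently, that in the adjacent case the apex may be taken to be a single triangle), which I would settle by a shelling argument removing the apex triangle and invoking minimality of the edge-diagram.
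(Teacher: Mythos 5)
Your overall route is the intended one: realize $v_1,v_2$ as the penultimate vertices of the two boundary geodesics of a lunar disc diagram, note that the domain is intrinsically lunar because $f$ is $1$-Lipschitz, apply Proposition~\ref{prop:lunar2}, and push the conclusion forward; your push-forward in alternative~(i) and the observation that a non-adjacent pair rules out alternative~(i) are correct. The serious gap is your treatment of case~(i) of the corollary via capping. The capped diagram (minimal edge-diagram plus the apex triangle) need not be minimal, and your shelling argument cannot show that it is: if a competing filling $D''$ of $\gamma_1\cup\gamma_2$ has smaller area, the apex $v$ may have valence $\geq 3$ in $D''$, and removing its open star produces a filling of $\beta_1\cdot\lambda\cdot\bar\beta_2$, where $\lambda$ is a path of length $\ell\geq 2$ from $v_1$ to $v_2$ in the link of $v$ in $X$ --- not a filling of the edge-cycle --- so minimality of the edge-diagram gives no contradiction. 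Converting such a filling into one of the edge-cycle requires filling the cycle $\lambda\cup(v_2v_1)$, which may be an induced cycle of length $\geq 5$ in the link of $v$ (permitted by $5$-largeness) and hence may cost far more than $\ell-1$ triangles; so the capped diagram can genuinely fail to be minimal, in which case it is not a lunar disc diagram and Proposition~\ref{prop:lunar2} does not apply to it. The correct fix drops the capping: apply the proposition to an honest minimal filling, and if alternative~(ii) occurs for an adjacent pair, push forward as in your case~(ii) and observe that $v_1\,v_2\,f(v_2')\,f(v_1')$ is a $4$-cycle in the link of $f(v')$ in $X$ (its vertices are distinct once the heights below are settled); local $5$-largeness of $X$ forces a diagonal $v_1f(v_2')$ or $v_2f(v_1')$, and that vertex lies in $S_{n-2}(x)$, hence in $K(v_1)\cap K(v_2)$.

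Two further points. First, Lemma~\ref{lem:VK} applies to cycles embedded in $X^1$, and a lunar disc diagram requires $\gamma_1,\gamma_2$ to be disjoint except at their endpoints; your $\beta_1,\beta_2$ may cross. This is repaired by letting $z$ be the common vertex of $\gamma_1,\gamma_2$ other than $v$ farthest from $x$ and building the lunar diagram between $z$ and $v$; all your upper height estimates survive since $d(x,\cdot)\leq d(x,z)+d(z,\cdot)$, while the matching lower bounds come from adjacency as before. Second, you rightly flag the height bookkeeping in alternative~(ii), but your proposed fix does not close it: Lemma~\ref{lem:nice lunar diagram}(2) only compares $d(\cdot,x)$ with $d(\cdot,z)$, and neither it nor the strict decrease to $v_1',v_2'$ excludes the configuration $d_D(v',z)=d_D(v,z)$ and $d_D(v_i',z)=d_D(v,z)-1$, which the bare statement of Proposition~\ref{prop:lunar2}(ii) allows; pushing that forward yields only $d(f(v_i'),x)\leq n-1$, not $n-2$, so $f(v')$ and $f(v_i')$ need not land in $K(v)$ and $K(v_iv')$. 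What you need is the stronger conclusion that the proof of the proposition actually establishes: alternative~(ii) arises only in its case $\kappa'(v)=\kappa'(v_1)=\kappa'(v_2)=1$, where $v,\tilde v_1,\tilde v_2$ all have valence $3$ in the diagram, so that (by flagness and Corollary~\ref{cor:5large}) $v'$ is their common third neighbour and $v_1',v_2'$ are the boundary vertices following $\tilde v_1,\tilde v_2$ along the two geodesics, at distance exactly $d_D(z,v)-2$ from the basepoint. With that strengthening (which is not recoverable from the statement of the proposition alone), your push-forward gives the corollary verbatim.
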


\begin{prop}
    Any induced $5$-cycle in $K(v)$ is the boundary of a wheel in $K(v)$.
\end{prop}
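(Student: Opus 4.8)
The plan is to produce a single vertex $w\in K(v)$ adjacent to all of $c_1,\dots,c_5$, where $c_1c_2c_3c_4c_5$ is the given induced $5$-cycle in $K(v)$; then $(w,c_1c_2c_3c_4c_5)$ is a wheel in $K(v)$, as required. The vertex $v$ is adjacent to all the $c_i$, but it lies in $S_n(x)$ rather than in $K(v)\subseteq S_{n-1}(x)$, so it is only an \emph{upper} cone point. My plan is to pair the upper wheel $W_1=(v,c_1c_2c_3c_4c_5)$ with a \emph{lower} wheel $W_2$ to form a dwheel of boundary length $\le 7$ and then invoke $7$-location, which hands back a common neighbour of every vertex of $W_1\cup W_2$; a distance computation will force this common neighbour into $K(v)$.

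To build $W_2$, first I would apply Corollary~\ref{prop: diam of K}(i) to the edges $c_5c_1$ and $c_4c_5$ (all $c_i$ lie in $K(v)$ and consecutive ones are neighbours) to obtain common downward neighbours $b_5\in K(c_1)\cap K(c_5)$ and $b_4\in K(c_4)\cap K(c_5)$, both in $S_{n-2}(x)$. Since $b_4$ and $b_5$ both lie in $K(c_5)$, which has diameter $\le 2$ by Corollary~\ref{prop: diam of K}, they are joined in $K(c_5)$ by a path with at most one interior vertex $t$. Setting $W_2=(c_5,\,c_1\,b_5\,t\,b_4\,c_4\,v)$, and omitting $t$ when $b_4,b_5$ are adjacent or equal, makes $c_5$ adjacent to every boundary vertex, while $c_1\sim b_5$, $b_5\sim t\sim b_4$, $b_4\sim c_4$, $c_4\sim v$, and $v\sim c_1$ show the boundary is a cycle. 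Choosing a shortest descending arc between $c_1$ and $c_4$ (any chord only shortens it) makes $W_2$ a genuine induced wheel. Then $(W_1,W_2)$ is a planar $(5,\ell)$-dwheel with $\ell\le 6$: indeed $v_k=c_5=w_0$, $w_\ell=v=v_0$, $v_{k-1}=c_4=w_{\ell-1}$, and $w_1=c_1=v_1$. Its boundary is $c_1c_2c_3c_4b_4\,t\,b_5$, of length at most $7$, so $7$-location yields a common neighbour $w$ of all vertices of $W_1\cup W_2$, in particular of $v$ and of $c_1,\dots,c_5$.

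It then remains to locate $w$: from $w\sim v\in S_n(x)$ I get $d(x,w)\ge n-1$, while $w\sim b_5\in S_{n-2}(x)$ gives $d(x,w)\le n-1$, so $w\in S_{n-1}(x)$, and as $w\sim v$ we conclude $w\in K(v)$; being adjacent to every $c_i$, it is the desired cone point. The part I expect to be most delicate is keeping $|\partial W|\le 7$, that is, forcing the descending arc $b_5\,t\,b_4$ to have at most three vertices: this is exactly where the diameter bound on the downward link $K(c_5)$ (Corollary~\ref{prop: diam of K}) is indispensable, and it is what pins the constant to $7$, the worst case being a planar $(5,6)$-dwheel of boundary length $7$ — precisely the threshold allowed by the $7$-location hypothesis. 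A secondary point requiring care is checking, after a minimal descending arc is chosen, that $W_1$ and $W_2$ are genuine induced wheels sharing the prescribed vertices $c_1,c_4,v$ and centre $c_5$, and that the degenerate coincidences (such as $b_4=b_5$, or $b_4=b_5=x$ when $n=2$) only shorten the boundary.
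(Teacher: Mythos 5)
Your argument is correct, but it builds the dwheel differently from the paper, so the two proofs are genuinely distinct. Writing the cycle as $\gamma=c_1c_2c_3c_4c_5$, the paper applies Corollary~\ref{prop: diam of K}(ii) to the non-adjacent pair $c_1,c_4$, obtaining a common neighbour $v'\in K(v)$ together with an edge $v_1'v_2'$ in $S_{n-2}(x)$, and forms the planar $(5,5)$-dwheel with wheels $(v,c_1c_2c_3c_4v')$ and $(v',c_1v_2'v_1'c_4v)$, of boundary length $6$; because $c_5$ has been replaced by $v'$ in the upper wheel, the paper needs a preliminary case ($v'$ adjacent to $c_2$ or $c_3$, in which case $v'$ itself is the cone point by $5$-largeness of $K(v)$) and a closing $5$-largeness argument to make the vertex produced by $7$-location adjacent to $c_5$. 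You instead keep $\gamma$ itself as the boundary of the upper wheel and centre the lower wheel at the cycle vertex $c_5$, descending into $S_{n-2}(x)$ via Corollary~\ref{prop: diam of K}(i) applied to the edges $c_5c_1$ and $c_4c_5$, and via the diameter bound on the lower downward link $K(c_5)$ (legitimate, since $n\geq 2$ whenever $K(v)$ contains a $5$-cycle); the resulting planar dwheel has boundary length up to $7$, and its common neighbour is adjacent to all five $c_i$ at once, so no case split or closing argument is needed. The trade-offs: the paper's dwheel would survive under mere $6$-location and uses the Corollary at a single level, while yours exploits the full threshold $7$ and invokes the Corollary both for $K(v)$ and for the lower link $K(c_5)$. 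Two details to pin down, both of which work out: (a) the descending arc should be taken to be a shortest path from $c_1$ to $c_4$ all of whose interior vertices lie in $K(c_5)$ --- then minimality excludes chords within the arc, the distance gap excludes chords to $v$, and inducedness of $\gamma$ excludes $c_1c_4$, so $W_2$ is an honest induced wheel; (b) in the degenerate case where the arc has a single interior vertex (e.g.\ $b_4=b_5$, in particular when $n=2$), the boundary of $W_2$ would be an induced $4$-cycle in the link of $c_5$, contradicting local $5$-largeness, so this case never actually occurs --- and even formally it causes no harm, since $7$-location applies to $(5,4)$-dwheels as well.
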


\begin{proof}
    Let $\gamma=v_2w_2w_1v_1u$ be an induced $5$-cycle in $K(v)$. %
    Let $v', v_1',v_2'$ be as in \Cref{prop: diam of K}(ii). If $v'$ is a neighbour of $w_1$ or $w_2$, then, since $K(v)$ is $5$-large, we have that $(v',\gamma)$ is the required wheel. Otherwise, $W_1 = (v,v_2w_2w_1v_1v')$ is a $5$-wheel. Since $W_2=(v',v_2v_2'v_1'v_1v)$ is also a $5$-wheel, $(W_1,W_2)$ 
    is a $(5,5)$-dwheel, and so all the vertices of $W_1\cup W_2$ have a common neighbour $y$ of $X$. Since $y$ is a neighbour of both $v$ and $v_1$, we have that $y$ is a vertex of $K(v)$. Since $K(v)$ is $5$-large, considering the cycle $v_1uv_2y$ we obtain that $y$ is also a neighbour of $u$. Thus $(y,\gamma)$ is the required wheel.
    
\end{proof}

\begin{cor} \label{prop:vertex link contractible}
    Each $K(v)$ satisfies the hypotheses of \Cref{lem:small balls are contractible}, and so it is contractible.
\end{cor}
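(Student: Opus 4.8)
The plan is to verify, one by one, the three hypotheses of \Cref{lem:small balls are contractible} for the downward link $K(v)$, and then simply invoke that lemma; each ingredient has essentially been prepared in the preceding discussion, so the proof should amount to an assembly of earlier results rather than any new argument.

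First I would dispatch hypothesis (2), that $K(v)$ is $5$-large. This is immediate from $X$ being locally $5$-large: $K(v)$ is the full subcomplex of the link of $v$ spanned by the vertices lying in $S_{n-1}(x)$, and the link of $v$ is $5$-large. A full subcomplex of a flag complex is flag, and an induced $4$-cycle in $K(v)$ would remain induced (and chordless) in the link of $v$, contradicting $5$-largeness there; hence $K(v)$ inherits $5$-largeness. Next I would establish hypothesis (1), that $K(v)$ has diameter $\leq 2$. For this I would appeal to \Cref{prop: diam of K}(ii): any two non-adjacent vertices $v_1,v_2$ of $K(v)$ admit a common neighbour $v'$ in $K(v)$, so every pair of vertices is at distance at most $2$ in $K(v)$. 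Finally, hypothesis (3), that every induced $5$-cycle in $K(v)$ bounds a wheel of $K(v)$, is exactly the content of the Proposition immediately preceding this corollary, so nothing further is needed.

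With (1), (2), and (3) in hand, \Cref{lem:small balls are contractible} applies directly and yields that $K(v)$ is contractible, completing the proof. I do not anticipate any genuine obstacle at this stage: all of the real difficulty was absorbed earlier, principally in \Cref{prop:lunar2} (from which both the diameter bound of \Cref{prop: diam of K} and, through the wheel-filling Proposition, hypothesis (3) are derived) and in the structural \Cref{lem:small balls are contractible} together with \Cref{lem:HP}. The only small point worth stating carefully is the inheritance of $5$-largeness by the full subcomplex $K(v)$, but this is routine once one notes that $K(v)$ is a full subcomplex of a $5$-large link.
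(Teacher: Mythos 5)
Your proposal is correct and follows essentially the same route as the paper, whose ``proof'' of this corollary is distributed in the preceding text: hypothesis (2) from local $5$-largeness (your observation that $K(v)$ is an induced subcomplex of the $5$-large link of $v$ just makes this explicit), hypothesis (1) from \Cref{prop: diam of K}(ii) (which the paper derives from Proposition~\ref{prop:lunar2}), and hypothesis (3) from the immediately preceding proposition, after which \Cref{lem:small balls are contractible} is invoked.
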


\begin{prop}
\label{prop:nonempty}
   Let $n > 0$, and let $\sigma$ be a simplex of $S_n(x)$. Then $K(\sigma)$ is nonempty.
\end{prop}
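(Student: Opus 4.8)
The plan is to induct on the dimension of $\sigma$, reducing the general case to an edge-type statement that I would settle by a disc-diagram argument modelled on \Cref{prop:lunar2}. The base case $\dim\sigma=0$ is immediate: for $\sigma=\{v\}$ a geodesic from $x$ to $v$ in $X^1$ has its penultimate vertex in $K(v)$, so $K(v)\neq\emptyset$ (and $K(v)$ is even contractible by \Cref{prop:vertex link contractible}). For the inductive step write $\sigma=\{v_0\}\ast\tau$ with $\tau=\{v_1,\dots,v_k\}$. By the inductive hypothesis there is $b\in K(\tau)$, so $b\in S_{n-1}(x)$ and $b$ is a neighbour of every $v_i$ with $i\geq1$. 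If $b$ is also a neighbour of $v_0$ we are done; otherwise $d(b,v_0)=2$, since $b\sim v_1\sim v_0$. Requiring the sought vertex to lie in the $5$-large link $\mathrm{lk}(\tau)$, the proposition then reduces to the following edge-type claim: given $v_0\in S_n(x)$ and $b\in S_{n-1}(x)$ with $d(v_0,b)=2$, both adjacent to $\tau$, there is a common neighbour of $v_0$ and $b$ in $S_{n-1}(x)\cap\mathrm{lk}(\tau)$; such a vertex is adjacent to all of $\sigma$ and lies in $S_{n-1}(x)$, hence in $K(\sigma)$.

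To prove this edge-type claim I would fill a disc. Fixing $v_1\in\tau$, the path $v_0v_1b$ has length $2$; concatenating it with geodesics $b\to x$ and $x\to v_0$ and then discarding the common initial segment of the two geodesics produces an embedded, homotopically trivial cycle whose boundary is a union of two geodesics issuing from their divergence vertex together with the short tip $v_0v_1b$. By \Cref{lem:VK} it bounds a minimal disc diagram $D\to X$, which by \Cref{lem:main interior valence} and \Cref{cor:5large} is $7$-located, locally $5$-large, and $5$-large. Thus $D$ is a lune with a short tip, to which the combinatorial Gauss--Bonnet count (the $\kappa'$ method) of \Cref{prop:lunar2} applies. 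Running that count should yield a vertex adjacent to both $v_0$ and $v_1$ and strictly closer to $x$; after checking, via the no-induced-$4$-cycle property of $\mathrm{lk}(\tau)$, that it is adjacent to all of $\tau$, this vertex is the required common neighbour.

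The main obstacle is exactly this edge-type step, and it is where $7$-location is indispensable: the loop $x\,b\,v_1\,v_0$ closes up to a cycle that in a merely locally $5$-large complex need not bound a wheel---the icosahedral example discussed earlier already contains induced $5$-cycles with no cone vertex---so the common neighbour cannot be found locally and must be produced by filling and analysing $D$. The two delicate points I anticipate are (a) arranging the two geodesics so that, after trimming, the diagram is genuinely lunar with a short tip, and (b) verifying that the vertex produced by the curvature count lies at distance exactly $n-1$ from $x$ rather than $n$; both should follow from the distance bookkeeping along the boundary geodesics together with \Cref{prop: diam of K}.
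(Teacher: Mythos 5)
Your inductive skeleton (peel off one vertex $v_0$ and use a single vertex $b\in K(\tau)$ from the opposite facet) is plausible, but the heart of your argument --- the ``edge-type claim'' --- is left unproved, and the tool you invoke does not apply to the diagram you build. Proposition~\ref{prop:lunar2}, via Lemma~\ref{lem:nice lunar diagram}, is a statement about \emph{lunar} diagrams, whose boundary is a union of two geodesics meeting only at their endpoints; your diagram has in addition the tip $bv_1v_0$, so its boundary carries four special vertices (the branch vertex near $x$, and $b$, $v_1$, $v_0$), each of which can a priori have valence $2$ and hence carry $\kappa'$ equal to $2$. Since $4\cdot 2=8>6$, the Gauss--Bonnet count no longer closes: nothing forces an exposed vertex at the tip, and one would have to redo Lemma~\ref{lem:nice lunar diagram} and the Claim of Proposition~\ref{prop:lunar2} for such ``tipped'' diagrams from scratch --- a substantial new argument, not a routine application. (A smaller issue: discarding the common initial segment of the two geodesics does not make the resulting cycle embedded, which Lemma~\ref{lem:VK} requires.) The paper takes a different route precisely here: for an edge $v_1v_2$ it cones off by an artificial vertex $v$ --- observing this preserves $7$-location and local $5$-largeness --- and quotes Corollary~\ref{prop: diam of K}(i); for $\dim\sigma>1$ it uses \emph{two} inductively produced vertices, $v_1\in K(\sigma')$ and $v_2\in K(e)$, assembles from them and from the vertices $v',v_1',v_2'$ of Corollary~\ref{prop: diam of K}(ii) an explicit $(5,5)$-dwheel, and obtains the desired vertex from $7$-location applied to that dwheel. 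So $7$-location enters through a dwheel in $X$, not through a new curvature count in a disc diagram.

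Second, even granting the output you assert for the count --- a vertex $z\in S_{n-1}(x)$ adjacent to $v_0$ and $v_1$ --- the propagation step fails as written. For $w\in\tau$ distinct from $v_1$, the only $4$-cycle available through $z$ and $w$ is $zv_0wv_1$, which contains the diagonal $v_0v_1$; it is therefore never induced, and $5$-largeness of $\mathrm{lk}(\tau)$ (or of any link) yields nothing. The propagation works only if $z$ is in addition adjacent to $b$: then $zv_0wb$ is a $4$-cycle in the $5$-large link of $v_1$ whose diagonal $v_0b$ is absent, forcing $z$ to be a neighbour of $w$. This is exactly how the paper propagates: $7$-location makes its vertex $z$ adjacent to \emph{all} of $W_1\cup W_2$, in particular to both endpoints $v_1,y$ of a non-edge, and the absent diagonal $v_1y$ drives the $5$-largeness argument. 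In your setup, adjacency of $z$ to $b$ is not among the properties you claim the count delivers, even though your own edge-type claim (``a common neighbour of $v_0$ \emph{and} $b$'') requires it; what you actually need from the tipped diagram is a vertex of $S_{n-1}(x)$ adjacent to all three of $b,v_1,v_0$ (in effect a $v_1$-exposed vertex at the tip), which makes the unproved counting step of the first gap strictly harder.
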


\begin{proof}
    Suppose first that $\sigma$ is an edge of $S_n(x)$ with vertices $v_1$ and $v_2$. We may obtain a new complex $X'$ by artificially adding to $X$ a vertex $v$ and a triangle $vv_1v_2$.
    This does not affect local $5$-largeness or $7$-location, so the proposition follows from \Cref{prop: diam of K}(i) applied to $v$ in $X'$.

    Now suppose $\dim(\sigma) > 1$. We fix two distinct vertices $v$ and $y$ of $\sigma$. Let $\sigma'$ be the subsimplex of $\sigma$ spanned on all the vertices except for $y$, and let $e=vy$.
    By induction, %
    we have vertices $v_1$ in $K(\sigma')$ and $v_2$ in $K(e)$.
    If neither $v_1$ nor $v_2$ lie in $K(\sigma)$, then there is $u\neq v$ in $\sigma'$ that is not a neighbour of $v_2$, and $y$ is not a neighbour of $v_1$. By the $5$-largeness of the link of $v$, the vertex $v_1$ is not a neighbour of $v_2$. Let $v',v'_1,v'_2$ be the vertices from Corollary~\ref{prop: diam of K}(ii). Note that if $v'$ is a neighbour of $y$, then by the $5$-largeness of the link of $v$ it lies in $K(\sigma')$. Thus we can assume that $v'$ is not a neighbour of $y$ and so $W_1=(v, v_2yuv_1v')$ is a $5$-wheel. Since $W_2=(v',v_2v_2'v_1'v_1v)$ is also a $5$-wheel, $(W_1,W_2)$ 
    is a $(5,5)$-dwheel, and so all the vertices of $W_1\cup W_2$ have a common neighbour $z$, which lies in $K(v)$. By the $5$-largeness of the link of $v$, the vertex $z$ lies in $K(\sigma')$.

\end{proof}

\begin{lemma} \label{prop:downward link of simplex is contractible}
   Let $n > 0$, and let $\sigma$ be a simplex of $S_n(x)$. Then for any vertex $v$ of~$\sigma$, the complex $K(\sigma)$ is a $3$-convex subcomplex of $K(v)$.
\end{lemma}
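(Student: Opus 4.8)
The plan is to verify the two requirements of $3$-convexity directly, the key tool being the $5$-largeness of the link of $v$.

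First I would check that $K(\sigma)$ is an induced subcomplex of $K(v)$. Since $v$ is a vertex of $\sigma\subseteq S_n(x)$, every vertex $w$ of $K(\sigma)$ is a neighbour of $v$ and lies in $S_{n-1}(x)$, hence in $K(v)$; moreover $K(\sigma)$ is precisely the full subcomplex of $K(v)$ on those vertices that are in addition neighbours of every vertex of $\sigma\setminus\{v\}$, so it is induced in $K(v)$.

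For $3$-convexity, let $abc$ be a path in $K(v)$ with $a,c\in K(\sigma)$ at distance $2$ in $K(v)$; I must show $b\in K(\sigma)$. As $b\in K(v)$, it already lies in $S_{n-1}(x)$ and is a neighbour of $v$, so it suffices to prove that $b$ is a neighbour of each remaining vertex $u$ of $\sigma$. Fix such a $u$. All four vertices $a,u,c,b$ lie in the link of $v$ --- the vertices $a,b,c$ because they belong to $K(v)$, and $u$ because $uv$ is an edge of $\sigma$ --- and they are pairwise distinct, since $u\in S_n(x)$ while $a,b,c\in S_{n-1}(x)$ and $a\neq c$. The edges $au$ and $cu$ are present because $a,c\in K(\sigma)$ are neighbours of $u$, while $ab$ and $bc$ are present because $abc$ is a path; thus $a,u,c,b$ span a $4$-cycle in the link of $v$ whose diagonal $ac$ is not an edge.

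The conclusion is then immediate: were $ub$ also not an edge, this $4$-cycle would be induced, contradicting the fact that the link of $v$ is $5$-large, and so has no induced $4$-cycle, because $X$ is locally $5$-large. Hence $ub$ is an edge. Since $u$ was an arbitrary vertex of $\sigma\setminus\{v\}$, the vertex $b$ is a neighbour of all of $\sigma$ and lies in $S_{n-1}(x)$, whence $b\in K(\sigma)$. I do not anticipate a serious obstacle; the only points that demand care are confirming that the four vertices $a,u,c,b$ are genuinely distinct and that both $K(v)$ and $K(\sigma)$ are induced subcomplexes, so that non-adjacency there coincides with non-adjacency in $X$.
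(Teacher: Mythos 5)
Your proof is correct and is essentially the paper's own argument: the paper also fixes an arbitrary vertex $y$ of $\sigma$ distinct from $v$ and applies the $5$-largeness of the link of $v$ to the $4$-cycle $abcy$ to force the edge $by$. Your additional verifications (that $K(v)$ and $K(\sigma)$ are induced, and that the four vertices are distinct) are details the paper leaves implicit, so there is no substantive difference.
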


Before the proof, let us note that from Lemma~\ref{prop:downward link of simplex is contractible}, Remark~\ref{lem:3-convex is contractable}, \Cref{lem:small balls are contractible}, Corollary~\ref{prop:vertex link contractible}, and Proposition~\ref{prop:nonempty}, we deduce:

\begin{cor} 
\label{cor:main}
Each $K(\sigma)$ is contractible.  
\end{cor}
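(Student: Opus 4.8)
The plan is to derive contractibility of $K(\sigma)$ by assembling the results already established, feeding the downward link of $\sigma$ into Lemma~\ref{lem:small balls are contractible}; no further curvature estimates are needed, since all of that work is encapsulated in Proposition~\ref{prop:lunar2} and the structural statements that follow it. Fix $n>0$, a simplex $\sigma\subseteq S_n(x)$, and choose any vertex $v$ of $\sigma$ (the choice is immaterial, as the inputs below hold for every vertex of $\sigma$).

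First I would verify that the three hypotheses of Lemma~\ref{lem:small balls are contractible} hold for $K(\sigma)$. By Corollary~\ref{prop:vertex link contractible} the ambient complex $K(v)$ already satisfies them: it has diameter $\leq 2$, it is $5$-large, and every induced $5$-cycle in it bounds a wheel. By Lemma~\ref{prop:downward link of simplex is contractible}, $K(\sigma)$ is a $3$-convex (hence induced) subcomplex of $K(v)$. I would then invoke Remark~\ref{lem:3-convex is contractable}, whose content is exactly that a $3$-convex subcomplex of a complex satisfying the hypotheses of Lemma~\ref{lem:small balls are contractible} again satisfies those hypotheses; applying it to $K(\sigma)\subseteq K(v)$ transfers conditions (1)--(3) to $K(\sigma)$. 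The mechanism behind this transfer is that $3$-convexity makes the distance-$\leq 2$ geometry of $K(v)$ internal to $K(\sigma)$---a neighbouring pair remains neighbouring because the subcomplex is induced, and a distance-$2$ pair still admits a common neighbour inside $K(\sigma)$---so that both the diameter bound and the wheel-filling property descend.

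Having placed the hypotheses, I would separately record that $K(\sigma)$ is nonempty, which is Proposition~\ref{prop:nonempty}; this is essential, since Lemma~\ref{lem:small balls are contractible} only produces a space homotopy equivalent to a point when its input is nonempty, and Remark~\ref{lem:3-convex is contractable} says nothing about nonemptiness. Applying Lemma~\ref{lem:small balls are contractible} to the nonempty complex $K(\sigma)$ then yields contractibility, completing the argument. The hard part is essentially already done: this is a genuine corollary, and the only steps demanding care are confirming that all three hypotheses of Lemma~\ref{lem:small balls are contractible}---not merely the diameter bound---are inherited by the $3$-convex subcomplex, together with the easily overlooked appeal to Proposition~\ref{prop:nonempty} guaranteeing nonemptiness.
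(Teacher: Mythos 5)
Your proposal is correct and matches the paper's deduction exactly: the paper derives Corollary~\ref{cor:main} from the same five ingredients (Corollary~\ref{prop:vertex link contractible} for the hypotheses on $K(v)$, Lemma~\ref{prop:downward link of simplex is contractible} for $3$-convexity, Remark~\ref{lem:3-convex is contractable} to transfer the hypotheses of Lemma~\ref{lem:small balls are contractible} to $K(\sigma)$, Proposition~\ref{prop:nonempty} for nonemptiness, and then Lemma~\ref{lem:small balls are contractible} itself). You even correctly read Remark~\ref{lem:3-convex is contractable} in its intended form (its statement contains a typo, saying ``$K$'' where it means the subcomplex $C$), and your emphasis on the nonemptiness step mirrors the paper's explicit citation of Proposition~\ref{prop:nonempty}.
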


\begin{proof}[Proof of Lemma~\ref{prop:downward link of simplex is contractible}.]
    Let $abc$ be a path in $K(v)$ with $a,c$ in $K(\sigma)$ at distance $2$ in~$K(v)$. Let $y$ be any vertex of~$\sigma$ distinct from $v$. Applying the $5$-largeness of the link of $v$ to the cycle $abcy$, we obtain that $b$ is a neighbour of $y$. Since this holds for each~$y$, we have that $b$ belongs to $K(\sigma)$, as desired.

\end{proof}

\begin{proof}[Proof of the Main Theorem]
Let $X$ be a $7$-located locally $5$-large simplicial complex. By passing to the universal cover of $X$, we can assume that $X$ is simply connected.
It suffices to prove that each $B_n(x)$ is contractible. To do this, it suffices to show that for each finite induced subcomplex $A$ of $S_n(x)$, the span $A_0$ of $A\cup B_{n-1}(x)$ deformation retracts to $B_{n-1}(x)$. To this end, we order the simplices of $A$ in the order of nonincreasing dimension $\sigma_1,\sigma_2,\ldots,\sigma_k$. 
Let $A_i$ be the (not necessarily induced) subcomplex obtained from $A_{i-1}$ by removing the open star of $\sigma_i$. Each such star is the join of $\sigma_i$ with $K(\sigma_i)$, and so by Corollary~\ref{cor:main}, the complex $A_{i-1}$ deformation retracts to $A_i$. Consequently, $A_0$ deformation retracts to $A_k=B_{n-1}(x)$.
\end{proof}

\begin{bibdiv}
\begin{biblist}

\bib{BC}{article}{
   author={Bandelt, Hans-J\"urgen},
   author={Chepoi, Victor},
   title={Metric graph theory and geometry: a survey},
   conference={
      title={Surveys on discrete and computational geometry},
   },
   book={
      series={Contemp. Math.},
      volume={453},
      publisher={Amer. Math. Soc., Providence, RI},
   },
   isbn={978-0-8218-4239-3},
   date={2008},
   pages={49--86}}

\bib{CO}{article}{
   author={Chepoi, Victor},
   author={Osajda, Damian},
   title={Dismantlability of weakly systolic complexes and applications},
   journal={Trans. Amer. Math. Soc.},
   volume={367},
   date={2015},
   number={2},
   pages={1247--1272}}

\bib{H}{article}{
   author={Haglund, Fr\'ed\'eric},
   title={Complexes simpliciaux hyperboliques de grande dimension},
   status={Pr\'epublication Orsay 71},
   date={2003}}

\bib{HL}{article}{
   author={Hoda, Nima},
   author={Laz\u ar, Ioana-Claudia},
   title={$7$-Location, weak systolicity, and isoperimetry},
   journal={Trans. London Math. Soc.},
   volume={12},
   date={2025},
   number={1},
   pages={Paper No. e70016}}

\bib{HP}{article}{
   author={Huang, Jingyin},
   author={Przytycki, Piotr},
   title={$353$-combunatorial curvature and the $3$-dimensional $K(\pi,1)$ conjecture},
   status={available at arXiv:2509.06914},
   date={2025}}

\bib{JS}{article}{
   author={Januszkiewicz, Tadeusz},
   author={\'Swi\c atkowski, Jacek},
   title={Simplicial nonpositive curvature},
   journal={Publ. Math. Inst. Hautes \'Etudes Sci.},
   number={104},
   date={2006},
   pages={1--85}}

\bib{MW}{article}{
   author={McCammond, Jon},
   author={Wise, Daniel T.},
   title={Fans and ladders in small cancellation theory},
   journal={Proc. London Math. Soc. (3)},
   volume={84},
   date={2002},
   number={3},
   pages={599--644}}

\bib{O1}{article}{
   author={Osajda, Damian},
   title={A combinatorial non-positive curvature I: weak systolicity},
   status={available at arXiv:1305.4661},
   date={2013}}

\bib{O2}{article}{
   author={Osajda, Damian},
   title={Combinatorial negative curvature and triangulations of
   three-manifolds},
   journal={Indiana Univ. Math. J.},
   volume={64},
   date={2015},
   number={3},
   pages={943--956}}

\end{biblist}
\end{bibdiv}

\end{document}